\newtheorem{theorem}{Theorem}
\newtheorem{lemma}[theorem]{Lemma}
\newtheorem{corollary}[theorem]{Corollary}
\theoremstyle{definition}
\newtheorem{definition}[theorem]{Definition}
\theoremstyle{remark}
\newtheorem*{remark}{Remark}
\DeclareMathOperator{\KP}{\textit{K}\,}
\DeclareMathOperator{\KS}{\textit{C}\,}
\DeclareMathOperator{\KPt}{\textit{K}}
\DeclareMathOperator{\MLR}{\textit{MLR}}
\DeclareMathOperator{\m}{\mathbf{m}}
\newcommand{\leK}{\mathrel{\le_{\textrm{LK}}}}
\newcommand{\leMLR}{\mathrel{\le_{\textrm{LR}}}}
\newcommand{\cnd}{\,|\,}
\let\le=\leqslant
\let\ge=\geqslant
\title{$\KPt$-trivial, $\KPt$-low and $\MLR$-low sequences:\\
a tutorial}
\author{Laurent~Bienvenu\thanks{LIAFA, CNRS, University Paris Diderot, Poncelet lab (Moscow),\hfil\break \texttt{laurent.bienvenu@computability.fr}},~ Alexander~Shen\thanks{LIRMM, Montpellier, CNRS, UM2, on leave from IITP RAS, Moscow,\hfil\break
\texttt{alexander.shen@lirmm.fr}}}
\date{}
\begin{document}
\maketitle

\begin{abstract}
A remarkable achievement in algorithmic randomness and algorithmic information theory was the discovery of the notions of $\KPt$-trivial, \hbox{$\KPt$-low} and Martin-L\"of-random-low sets: three different definitions turn out to be equivalent for very non-trivial reasons~\cite{dhns,nies-main,hs}. This survey, based on the course taught by one of the authors (L.B.) in Poncelet laboratory (CNRS, Moscow) in 2014, provides an exposition of the proof of this equivalence and some related results.

We assume that the reader is familiar with basic notions of algorithmic information theory (see, e.g., \cite{uppsala-notes} for introduction and \cite{usv} for more detailed exposition). More information about the subject and its history can be found in~\cite{nies,dh}.
\end{abstract}

\section{Notation}
We consider the Cantor space $\mathbb{B}^\mathbb{N}$ of infinite binary sequences $a_0a_1\ldots$; points in this space are idendified with sets (each sequence is considered as a characteristic sequence of a set of natural numbers) or paths in the full binary tree (nodes are elements of $\mathbb{B}^*$, i.e., binary strings; a sequence $a$ is a path going through its prefixes $(a)_n=a_0a_1\ldots a_{n-1}$). We denote plain Kolmogorov complexity by $\KS(x)$; we use $\KS(x,y)$ to denote complexity of pairs and $\KS(x\cnd y)$ for conditional complexity. The arguments $x,y$ here are binary strings, natural numbers (that are often identified with binary strings using a standard bijection) or some other finite objects. Similar notation with $\KP$ instead of $\KS$ is used for prefix complexity.  

 By $\m(x)$ we denote the discrete a priori probability of $x$, the largest lower semicomputable semimeasure on $\mathbb{N}$; it is equal to $2^{-\KPt(x)}$ up to a $\Theta(1)$-factor.  The same notation is used when $x$ is a binary string (identified with the corresponding natural number) or some other finite object.

\section{$\KPt$-trivial Sets: Definition and Existence}

Consider an infinite bit sequence and complexities of its prefixes. If they are small, the sequence is computable or almost computable; if they are big, the sequence looks random.   This idea goes back to 1960s and appears in algorithmic information theory in different forms (Schnorr--Levin criterion of randomness in terms of complexities of prefixes, the notion of algorithmic Hausdorff dimension). The notion of $\KPt$-triviality is on the low end of this spectrum; here we consider sequences that have prefixes of minimal possible prefix complexity:

\begin{definition}
An infinite binary sequence $a_0a_1a_2\ldots$, is called \emph{$\KPt$-trivial} if its prefixes have minimal possible \textup(up to a constant\textup) prefix complexity, i.e., if 
      $$
\KP(a_0 a_1\ldots a_{n-1})=\KP(n)+O(1).      
      $$
\end{definition}

Note that $n$ can be reconstructed from $a_0\ldots a_{n-1}$, so $\KP(a_0\ldots a_{n-1})$ cannot be smaller than $\KP(n)-O(1)$. Note also that every computable sequence is $\KPt$-trivial, since $a_0\ldots a_{n-1}$ can be computed given~$n$. These two remarks together show that a $\KPt$-trivial sequence is very close to being computable. And indeed, if we were to replace prefix complexity by plain complexity~$\KS$ in the definition, the resulting notion, call it $\KS$-triviality, would be equivalent to being computable (it is in fact enough to have $\KS(a_0 a_1\ldots a_{n-1})\le\log n +O(1)$ to ensure that~$a_0a_1a_2\ldots$ is computable, see for example~\cite[problems 48 and 49]{usv}). Nonetheless, we shall see below that non-computable $\KP$-trivial sequences do exist. But before that, let us prove the following result, due to Chaitin.

\begin{theorem}\label{thm:trivial-halting-computable}
Every $\KPt$-trivial sequence is $\mathbf{0}'$-computable.
\end{theorem}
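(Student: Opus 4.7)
The plan is to combine a counting bound on complexity-bounded strings with the fact that finite effectively closed classes consist of computable members. Fix once and for all a constant $c$ for which $\KP((a)_n) \le \KP(n) + c$ holds for all $n$, and let $S_c^{(n)} = \{x \in \mathbb{B}^n : \KP(x) \le \KP(n) + c\}$; by $\KPt$-triviality, $(a)_n$ always lies in $S_c^{(n)}$.

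The first key step is to prove that $|S_c^{(n)}| = O(2^c)$ with the implicit constant uniform in $n$. I would argue via the discrete a priori probability: the function $\nu(n) := \sum_{|x|=n} \m(x)$ is a lower semicomputable semimeasure on $\mathbb{N}$, and since $\sum_n \nu(n) = \sum_x \m(x) \le 1$, the universality of $\m$ on $\mathbb{N}$ gives $\nu(n) = O(\m(n))$. For each $x \in S_c^{(n)}$ one has $\m(x) \ge \Omega(2^{-c}\m(n))$, and summing over $S_c^{(n)}$ yields the bound. (Alternatively, one could invoke Levin's symmetry of information to reduce to the standard counting of conditional prefix descriptions.)

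The second step is to observe that the predicate ``$\KP(y) \le \KP(n) + c$'' is $\Sigma^0_1$ in $y,n$: it is equivalent to $\exists k\,(\KP(n) \le k \wedge \KP(y) \le k + c)$, both halves being $\Sigma^0_1$. Hence $P_c := \{b \in \mathbb{B}^{\mathbb{N}} : (b)_n \in S_c^{(n)} \text{ for all } n\}$ is $\Pi^0_2$, i.e.\ a $\Pi^0_1$ class relative to $\mathbf{0}'$. By the counting bound $P_c$ is finite, and it contains $a$. A standard relativization of the fact that a finite $\Pi^0_1$ class has only computable members then finishes the proof: each element of $P_c$ is isolated, so some prefix singles it out, and from that prefix the remaining bits can be read off by asking $\mathbf{0}'$ which of the two extensions of the current prefix is eventually pruned from the class. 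The principal obstacle is the counting bound: establishing that $|S_c^{(n)}|$ stays bounded uniformly in $n$ is where the theory of $\KPt$ and $\m$ must be genuinely used, and once this is in hand the $\mathbf{0}'$-computability follows from a by now standard basis argument.
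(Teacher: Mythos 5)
Your overall plan is correct and ends up close to the paper's own argument, but with a different justification of the key counting step: the paper obtains the ``$O(1)$ candidates at each level'' bound from the symmetry of information, writing $\KP((a)_n)=\KP(n)+\KP((a)_n\cnd n,\KP(n))+O(1)$ and concluding $\KP((a)_n\cnd n,\KP(n))=O(1)$, whereas you bound $|S_c^{(n)}|$ directly by comparing $\nu(n)=\sum_{|x|=n}\m(x)$ with $\m(n)$ via universality. That counting argument is correct and arguably more elementary, since it avoids the (nontrivial) pair-complexity formula; the final step (members of a finite $\Pi^0_1$ class relative to $\mathbf{0}'$ are $\mathbf{0}'$-computable) is the same in substance as the paper's ``bounded-width $\mathbf{0}'$-computable tree'' argument.

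There is, however, a genuine error in your second step as written. The predicate $\KP(y)\le\KP(n)+c$ is \emph{not} $\Sigma^0_1$, and the claimed equivalence with $\exists k\,(\KP(n)\le k\wedge \KP(y)\le k+c)$ is false: that formula holds for \emph{every} pair $(y,n)$, since one may take $k\ge\max(\KP(n),\KP(y)-c)$. (The predicate compares two quantities that are only approximable from above, so its truth value can oscillate during the approximation; it is $\Delta^0_2$ but in general neither $\Sigma^0_1$ nor $\Pi^0_1$.) Fortunately, $\Delta^0_2$ is all you need: since $\{(y,k):\KP(y)\le k\}$ is c.e., the function $\KP$ is computable from $\mathbf{0}'$, hence each $S_c^{(n)}$ is uniformly $\mathbf{0}'$-computable, the tree of strings all of whose prefixes lie in the corresponding $S_c^{(m)}$ (note that $S_c^{(n)}$ by itself need not be prefix-closed, which is why one passes to this tree, i.e., to your $P_c$) is $\mathbf{0}'$-computable of width $O(2^c)$, and $P_c$ is a $\Pi^0_1$ class relative to $\mathbf{0}'$. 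With this repair the rest goes through: $P_c$ is finite, so $a$ is isolated in it, and past an isolating prefix the wrong extension of any prefix of $a$ has a finite subtree by compactness; detecting this is an unbounded but $\mathbf{0}'$-effective search (not a single oracle query, as your phrasing suggests), which is exactly the paper's ``wait until only $k$ candidates remain'' argument.
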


Here $\mathbf{0}'$ is the oracle for the halting problem.

\begin{proof}
Assume that the complexity of the $n$-bit prefix $(a)_n=a_0a_1\ldots a_{n-1}$ is $\KP(n)+O(1)$.
Recall that $(a)_n$ has the same information content as~ $(n,(a)_n)$, and use the formula for the complexity of a pair:
   $$
\KP((a)_n)=\KP(n,(a)_n)+O(1)=\KP(n)+\KP((a)_n\cnd n,\KP(n))+O(1);
   $$
This means that $$\KP((a)_n\cnd n,\KP(n))=O(1).$$ So $(a)_n$ belongs to a $\mathbf{0}'$-computable (given $n$) list of $n$-bit strings that has size~$O(1)$. Therefore, $a$ is a path in a $\mathbf{0}'$-computable tree of bounded width and is $\mathbf{0}'$-computable. Indeed, assume that the tree has $k$ infinite paths that all diverge before some level $N$. At levels after $N$ we can identify all the paths, since all other nodes have finite subtrees above them, and we may wait until only $k$ candidates remain. \qed
\end{proof}

The existence of non-computable $\KPt$-trivial sets is not obvious, but not very difficult to establish, even if we additionally require the set to be enumerable. Here we identify a set $A$ with its characteristic sequence $a_0a_1a_2\ldots$ (where $a_i=1$ if and only if $i\in A$).

\begin{theorem}\label{thm:existence}
There exists an enumerable undecidable $\KPt$-trivial set.
\end{theorem}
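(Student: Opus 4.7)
We build $A$ in stages, enumerating at most one element per stage, aiming simultaneously for two goals: (i) non-computability, by diagonalizing against every partial computable $\{0,1\}$-valued function $\phi_e$, and (ii) $\KPt$-triviality, by enumerating a c.e.\ set of ``requests'' $(l_i, y_i)$ of total weight $\sum 2^{-l_i}\le 1$ that contains, for every $n$, a request of length $\KP(n)+O(1)$ for the final prefix $(a)_n$. The machine existence (Kraft--Chaitin) theorem applied to such a request set then yields $\KP((a)_n)\le \KP(n)+O(1)$.

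The balancing is done via a cost function. Let $\KPt_s$ denote the standard non-increasing computable approximation to $\KP$, and set
$$
c(x,s) \;=\; \sum_{x<n\le s} 2^{-\KPt_s(n)}.
$$
If $x$ enters $A$ at stage $s$, then every prefix $(a)_n$ with $x<n\le s$ changes, and we must issue a fresh request of length $\KPt_s(n)+O(1)$ for each new prefix, contributing total weight of order $c(x,s)$. Since $\KPt_s(n)\ge\KP(n)$, we have $c(x,s)\le\sum_{n>x} 2^{-\KP(n)}$, and this tail vanishes as $x\to\infty$ because $\sum_n 2^{-\KP(n)}<\infty$ (Kraft's inequality applied to the universal prefix-free machine).

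Treat each requirement $R_e\colon\chi_A\ne\phi_e$ independently: choose a fresh witness $w_e$ so large that $c(w_e,s)\le 2^{-e-2}$ for every $s$, wait for $\phi_e(w_e)$ to converge, and enumerate $w_e$ into $A$ exactly when $\phi_e(w_e)=0$. Enumerations then contribute total request-weight $O\bigl(\sum_e 2^{-e-2}\bigr)=O(1)$, and the additional requests re-issued at stages where some $\KPt_s(n)$ drops contribute $O\bigl(\sum_n 2^{-\KP(n)}\bigr)=O(1)$; after shifting all request lengths by a sufficiently large additive constant, the total weight is $\le 1$ and KC applies. The main technical obstacle is that $w_e$ must be chosen effectively, whereas the true tail $\sum_{n>w_e} 2^{-\KP(n)}$ is not computable; this is handled by noting that $c(x,s)$ is non-decreasing in $s$ with finite limit, so one picks $w_e$ from the current approximation and enlarges it at later stages if the bound is ever violated. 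Because the limit itself tends to $0$ as $x\to\infty$, this process stabilizes, every $R_e$ is eventually satisfied, and the construction delivers a non-computable, c.e., $\KPt$-trivial set.
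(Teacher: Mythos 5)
Your proof is correct, and its engine is the same cost-function idea as the paper's: you may move the path (enumerate $x$ into $A$) only when the weight that must be re-created along the new path --- your $c(x,s)$, the paper's ``cost of the action'' --- is below a threshold of order $2^{-e}$ reserved for the $e$th requirement, so the total extra weight is bounded by $\sum_e 2^{-e}$ plus the weight needed to track the target quantity itself. The differences are in packaging. First, you work on the complexity side, issuing Kraft--Chaitin requests of length $\KPt_s(n)+O(1)$ and invoking the machine existence theorem, whereas the paper works on the measure side, directly building a lower semicomputable semimeasure $\nu$ with $\nu((a)_n)\ge \m(n)/O(1)$; these are routine translations of one another. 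Second, you ensure undecidability by diagonalizing against each partial computable $\phi_e$ with a movable witness, which forces you to confront the non-computability of the true tail $\sum_{n>x}2^{-\KP(n)}$; your stabilization argument (since $c(x,s)\le\sum_{n>x}2^{-\KP(n)}$ at every stage and the tail tends to $0$, after finitely many enlargements the witness is never again injured, and if $\phi_e$ then diverges or differs on it the requirement is met anyway) is the right fix. The paper instead makes $A$ simple in Post's sense: an infinite $W_e$ automatically supplies arbitrarily large, hence cheap, elements at the moment they appear, so no witness ever needs to be moved; that buys a slightly cleaner verification and the stronger conclusion that $A$ is simple, while your version follows the textbook cost-function route and makes the quantitative bookkeeping (total request weight $O\bigl(\sum_n 2^{-\KP(n)}\bigr)+\sum_e 2^{-e-2}$, then shift lengths by a constant) fully explicit.
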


This result was proven by Solovay in the 1970s.

Let us make some preparations for this proof which will be useful in the rest of the paper. First, to deal with $\KP$-triviality, it is easier to use a priori discrete probability~$\m$ instead of $\KP$ (see, e.g., \cite{uppsala-notes} or \cite{usv} for more background on $\m$ and its relation to prefix complexity; recall that $\m(x)$ coincides with $2^{-\KPt(x)}$ up to a $\Theta(1)$-factor). In this setting, a sequence~$a_0a_1\ldots$ is $K$-trivial if and only if
\[
\m(a_0a_1\ldots a_{n-1}) \geq \m(n)/O(1)
\]

Since $\m$ is multiplicatively maximal  among all lower semicomputable semi\-mea\-sures, the statement of Theorem~\ref{thm:existence} can be rephrased as follows:
\begin{quote}
for every lower semicomputable semimeasure~$\mu$, there exist an enumerable set $A$ with its characteristic sequence $a=a_0a_1a_2\ldots $ and a lower semicomputable semimeasure~$\nu$ such that $$\nu(a_0a_1\ldots a_{n-1}) \geq \mu(n)/O(1).$$
\end{quote}
In fact, we need this only for $\mu=\m$, but the argument works for any lower semicomputable semimeasure $\mu$ (and in any case the statement for $\mu=\m$ is stronger and implies the same inequality for every $\mu$, though with a different constant in $O(1)$-notation).

\begin{proof}
To prove Theorem~\textup{\ref{thm:existence}}, let us assume that $\mu$ is a lower semicomputable semimeasure. We want to build an enumerable set~$A$ that corresponds to a sequence $a_0a_1a_2\ldots$, together with a lower semicomputable semimeasure~$\nu$ such that~$\nu(a_0a_1\ldots a_{n-1})$ matches $\mu(n)$ up to a multiplicative constant for all~$n$. When we see that~$\mu$ increases the weight of some $n$, we should respond by increasing the $\nu$-weight of some node (=string) of length~$n$, achieving the same weight (up to $O(1)$-factor). Moreover, all these nodes should lie on the tree path that corresponds to some enumerable set~$A$.  

Doing this would be trivial for a computable sequence $a$: constructing $\nu$, we just place at $a_0a_1\ldots a_{n-1}$ the same weight as the current value of~$\mu$ at $n$. This (evidently) gives a semimeasure since the sum of the weights is the same for $\mu$ and $\nu$.

But we want $A$ to be non-computable. To achieve this, we will ensure that $A$ is simple in Post's sense. Recall that a simple set is an enumerable set $A$ with infinite complement such that $A$ has non-empty intersection with every $W_n$ that is infinite. Here by $W_n$ we denote the $n$-th enumerable set in some natural numbering of all (computably) enumerable sets. As in Post's cllassical construction, we want for every $n$ to add some element of $W_n$ greater than $2n$ into $A$, and then forget about $W_n$. The bound $2n$ guarantees that $A$ has infinite complement. In Post's construction the elements are added without reservations: as soon as some element that is greater than $2n$ is discovered in $W_n$, it is added to~$A$. But now, when adding such an element to $A$, we have to pay something for this action. Indeed, when we add some number $u$ to $A$, the path in the Cantor space corresponding to it (i.e., $A$'s characteristic sequence) changes. The $\nu$-weights put on the node $a_0a_1\ldots a_{u-1}$ and all its extensions are lost, and should be recreated along the new path (starting from length $u$).   All this lost amount can be called the \emph{cost} of the action.

Now we can explain the construction. Initially our set $A$ is empty, and the corresponding path $a$ in Cantor space is all zeros. Observing the growth of the semimeasure $\mu$, we replicate the corresponding values along $a$. We also enumerate all $W_n$  in parallel. When a new element~$u$ is enumerated into $W_n$, we add this element to~$A$ if the following two conditions are satisfied:
\begin{itemize}
\item $u>2n$; 
\item the cost of adding $u$ is small, say, less than $2^{-n}$, so the total cost for all $n$ is bounded. 
\end{itemize}
Here the \emph{cost of the action} is the total $\nu$-weight we had placed on the nodes along the current path $a$ starting from level $u$: this weight is lost and needs to be replicated along the new path.
In this way the total $\nu$-weight is bounded. Indeed, the lost weight is bounded by $\sum_n 2^{-n}$ (recall that we take care of each $W_n$ at most once), and the replicated weight is bounded by $\sum \mu(n)$. 

If $W_n$ is infinite, it contains arbitrarily large elements, and the cost of adding $u$ is bounded by $$\mu(u)+\mu(u+1)+\mu(u+2)+\ldots,$$ which is guaranteed to go below the $2^{-n}$ threshold for large $u$. So for every infinite $W_n$, some element~$u$ of~$W_n$ will be added to~$A$ at some stage of the construction. As we have seen, the total $\nu$-weight is bounded by $\sum_n \mu(n) + \sum_n 2^{-n}$. By construction, we have $\nu(a_0a_1\ldots a_{n-1}) \geq \mu(n)$ for all~$n$. It remains to divide $\nu$ by some constant to make the total weight bounded by $1$.
     \qed
\end{proof}

This proof can be represented in a game form. In such a simple case this looks like an overkill, but the same technique is useful in more complicated cases, so it is instructive to look at this version of the proof. The game field consists of the set of the natural numbers (\emph{lengths}), the full binary tree, and sets $W_1,W_2,\ldots$ (of natural numbers). The opponent increases the weights assigned to lengths: each length has some weight that is initially zero and can be increased by the opponent at any moment by any non-negative rational number; the only restriction is that the total weight of all lengths should not exceed~$1$. Also the opponent may add new elements to any of the sets $W_i$; initially they are empty. We construct a path $a$ in the binary tree that is a characteristic sequence of some set $A$, initially empty, by adding elements to $A$; we also increase the weights of nodes of the binary tree in the same way as the opponent does for lengths; our total weight should not exceed~$2$.

One should also specify when the players can make moves. It is not important, since the rules of the game always allow each player to postpone moves. Let us agree that the players make their moves in turns and every move is finite: finitely many weights of lengths and nodes are increased by some rational numbers, and finitely many new elements are added to $W_i$ and $A$. This is the game with full information, the moves of one player are visible to the other one.

The game is infinite, and the winner is determined in the limit, assuming that both players obey the weight restrictions. Namely, we win if
\begin{itemize}
\item for the limit path $a$ our weight of $(a)_n$ is not less than the opponent's weight of $n$;
\item for each $n$, if $W_n$ is infinite, then $W_n$ has a common element with $A$.
\end{itemize}

The winning strategy is as described: we match the opponent's weight along the current path, and also we add some $u$ to $A$ and change the path, matching the opponent's weights along the new path, if $u$ belongs to $W_n$, is greater than $2n$ and the cost of the action, i.e.,  our total weight along the current path above $u$, does not exceed $2^{-n}$. 

This is a computable winning strategy. Indeed, the limit weights of all lengths form a converging series, so if $W_n$ is infinite, it has some element that is greater than $2n$ and for which the loss, bounded by the tail of this series, is less than $2^{-n}$.

Imagine now that we use this computable winning strategy against the ``blind'' computable opponent that ignores our moves and just enumerates from below the a priori probability (as lengths' weights) and the sets $W_i$ (the list contains all enumerable sets). Then the game is computable, our limit $A$ is an enumerable simple set, and our weights for the prefixes of $a$ (and therefore $\m((a)_n)$, since the limit weights form a lower semicomputable semimeasure) match $\m(n)$ up to $O(1)$-factor.

\section{$\KPt$-trivial and $\KPt$-low Sequences}

Now we know that non-computable $\KPt$-trivial sequences do exist.  Our next big goal is to show that they are computationally weak. Namely, they are \emph{$\KPt$-low} in the sense of the following definition.

Consider a bit sequence $a$; one can relativize the definition of prefix complexity using $a$ as an oracle (i.e., the decompressor algorithm used in the definition of complexity may use the values of $a_i$ in its computation). For every oracle this relativized complexity $\KP^a$ does not exceed (up to an $O(1)$ additive term) the non-relativized prefix complexity, since the decompressor may ignore the oracle. But it can be smaller or not, depending on~$a$.

\begin{definition}
A sequence $a$ is $\KPt$-low if $\KP^a(x)=\KP(x)+O(1)$.
\end{definition}

In other words, $\KPt$-low oracles are useless for compression (or, more precisely, decompression) purposes. 

Obviously, computable oracles are low; the question is whether there exist non-computable low oracles.  Note that \emph{``classical'' undecidable sets, like the halting problem, are not $K$-low}: with oracle $\mathbf{0}'$ the table of complexities of all $n$-bit strings has complexity $O(\log n)$, but its non-relativized complexity is $n-O(1)$.  One can also consider the relativized and non-relativized complexities of the prefixes of Chaitin's $\Omega$-numbers: the $n$-bit prefix has complexity about $n$ but its $\mathbf{0}'$-relativized complexity is about $\log n$, since $\Omega$-numbers are $\mathbf{0}'$-computable. Note also that \emph{$\KPt$-low oracles are $\KPt$-trivial}, since $\KP^a ((a)_n)=\KP^a(n)+O(1)$: the sequence $a$ is computable in the presence of oracle $a$. 

It turns out that the reverse implication is true, and all $\KPt$-trivial sequences are $\KPt$-low. This is quite surprising. For example, one may note that \emph{the notion of a $\KPt$-low sequence is Turing-invariant}, i.e., depends only on the computational power of the sequence, but for $\KPt$-triviality there are no reasons to expect this, since the definition deals with prefixes. 

On the other hand, it is easy to see that \emph{if $a$ and $b$ are two $\KPt$-trivial sequences, then their join} (the sequence $a_0b_0a_1b_1a_2b_2\ldots$) \emph{is also $\KPt$-trivial}. Indeed, as we have mentioned, the $\KPt$-triviality of a sequence $a$ means that $\KP((a)_n\cnd n, \KP(n))=O(1)$. If at the same time $\KP((b)_n\cnd n, \KP(n))=O(1)$, then the na\"\i ve bound for the complexity of a pair guarantees that $$\KP((a)_n,(b)_n\cnd n, \KP(n))=O(1),$$ so $$\KP(a_0b_0a_1b_1\ldots a_{n-1}b_{n-1}\cnd n,\KP(n))=O(1),$$ and therefore $$\KP(a_0b_0a_1b_1\ldots a_{n-1}b_{n-1})=\KP(n)+O(1).$$ It remains to note that $\KP(n)=\KP(2n)+O(1)$ and that we can extend the equality to sequences of odd length, since adding one bit changes the complexity of the sequence and its length only by $O(1)$. The analogue result for $\KPt$-low sequences is not obvious: if each of the sequences $a$ and $b$ separately do not change the complexity function, why should their join be equally powerless in that regard? The usual proof of this result uses the equivalence between triviality and lowness.

The proof of the equivalence (every $\KPt$-trivial sequence is $\KPt$-low) requires a rather complicated combinatorial construction. It may be easier to start with a weaker statement:  \emph{no $\KPt$-trivial sequence} (used as an oracle) \emph{computes the halting problem}. This statement is indeed a corollary of the equivalence result, since $\mathbf{0}'$ (the halting set) is not $\KPt$-low, as we have seen, and every sequence computable with a $\KPt$-low oracle is obviously $\KPt$-low. The proof of this weaker statement is given in the next section. On the other hand, the full proof (hopefully) can be understood without the training offered in the next section, so the reader may also skip it and go directly to Section~\ref{sec:trivial-low}.

\section{$\KPt$-trivial Sequences Cannot Compute $\mathbf{0}'$}

In this section we prove that a $\KPt$-trivial sequence cannot compute $\mathbf{0}'$, in the following equivalent version:

\begin{theorem}\label{thm:incompleteness}
No $\KPt$-trivial sequence can compute all enumerable sets.
\end{theorem}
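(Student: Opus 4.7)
I argue by contradiction. Suppose $a$ is $K$-trivial and $\mathbf 0' \le_T a$, so every enumerable set is $\le_T a$. The goal is to construct an enumerable set $U$ satisfying $U \ne \Gamma_e^a$ for every Turing functional $\Gamma_e$, which will contradict the hypothesis that the enumerable $U$ itself is computable from $a$. The construction is a priority argument built on top of the game framework of Theorem~\ref{thm:existence}, and the $K$-triviality of $a$ is what makes it succeed.

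Two consequences of $K$-triviality drive the construction. First, by Theorem~\ref{thm:trivial-halting-computable}, $a$ is $\mathbf 0'$-computable and admits a computable approximation $a_s \to a$. Second, $K$-triviality gives $\m((a)_n) \ge \m(n)/O(1)$, which we read as in Theorem~\ref{thm:existence}: whenever $a_s$ shifts at a position below some level $u$, the weight previously accumulated on the old prefix of length $u$ and on its extensions must be replicated on the new path, at a \emph{cost} bounded above by the tail $\sum_{n \ge u} \m(n)$ of the a priori probability.

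For each requirement $R_e: \Gamma_e^a \ne U$, reserve a computable stream of candidate witnesses $x_e^0 < x_e^1 < \cdots$, chosen at positions large enough that the tail of $\m$ at $x_e^i$ is below a preassigned budget $\beta_{e,i}$, with $\sum_{e,i}\beta_{e,i} < \infty$. At each stage $s$, if some unused $x_e^i$ satisfies $\Gamma_e^{a_s}(x_e^i) \downarrow = 0$, enumerate $x_e^i$ into $U$. A witness enumerated at stage $s$ is \emph{successful} if the approximation was already correct below the use $u$ of the computation (so that $\Gamma_e^a(x_e^i) = 0 \ne 1 = U(x_e^i)$ and $R_e$ is met), and is \emph{invalidated} otherwise, namely when $a$ subsequently differs from $a_s$ below $u$, at a cost bounded by $\beta_{e,i}$.

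Each $R_e$ must eventually be met. If $\Gamma_e^a(x_e^i) = 1$ for some $i$, the approximation stabilizes and we never enumerate $x_e^i$, giving $U(x_e^i) = 0 \ne 1$ directly. Otherwise $\Gamma_e^a(x_e^i) = 0$ for every $i$, and some enumerated witness must be successful, because an infinite chain of invalidated witnesses of $R_e$ (across all $e$) would demand infinitely many shifts of $a$ whose total cost exceeds the finite budget $\sum_{e,i}\beta_{e,i}$, impossible since the total mass of the semimeasure witnessing $K$-triviality is bounded. The main obstacle to making this rigorous is the bookkeeping across interacting requirements: a single shift of $a$ at a low level can simultaneously invalidate witnesses of many different $R_e$'s whose uses all exceed that level, so the budgets and witness schedule must be arranged so that each shift is charged only once against the overall bound rather than multiply, exactly as in the game version of Theorem~\ref{thm:existence}, where the cost of an action is the total weight along the current path above the shift level, paid once per action.
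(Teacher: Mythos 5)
There is a genuine gap, and it sits exactly where the real difficulty of this theorem lies. Your key step is the claim that whenever the approximation $a_s$ changes below the use $u$ of a computation, the ``weight previously accumulated'' on the abandoned prefix must be re-spent along the new path, at a cost bounded by the tail $\sum_{n\ge u}\m(n)$, so that infinitely many invalidations would exceed the bounded mass of the semimeasure witnessing $K$-triviality. But $K$-triviality is a statement about the \emph{limit} sequence only: it says $\m((a)_n)\ge \m(n)/O(1)$ for the prefixes of $a$ itself, and it imposes no constraint whatsoever on what $\m$ does along the intermediate stages $a_s$ of an arbitrary computable approximation. The universal semimeasure on strings is under no obligation to have placed any weight on the abandoned prefixes at the moment they are abandoned; being lower semicomputable, it can ``wait'' and put its weight only along the true path, at stages you cannot compute. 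So an invalidated witness costs the opponent nothing, and your charging argument collapses. You are implicitly importing the cost-function characterization of $K$-triviality (that some approximation of $a$ obeys the standard cost function), but that is a theorem of essentially the same depth as the one you are proving, proved by the same machinery, and it is not available here. The situation of Theorem~\ref{thm:existence} is not symmetric: there \emph{we} built the sequence and chose to change it only when the change was cheap; here the sequence is given and its approximation changes whenever it likes.

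The second missing ingredient is the multiplicative constant. Even after one repairs the above by playing the game properly --- putting weight on lengths in small portions and waiting until the opponent matches it along the \emph{current} path before proceeding, so that a later path change provably wastes opponent weight --- a single round of this forces the opponent to pay only about twice what you spend. That defeats the triviality constant only when it is less than $2$. Since the constant in $\m((a)_n)\ge\m(n)/c$ is arbitrary, one must beat every $c$, and this is why the paper's proof reduces to $c$-$\Gamma$-games and then performs an induction on $c$, with processes that recursively call strategies on subtrees to amplify the opponent's forced expenditure (the ``strong node'' mechanism plus the recursive $P(k,x,\alpha,L,M)$ processes). Your proposal has no counterpart of this amplification, and without it the argument cannot go through for $K$-trivial sequences whose constant is large; indeed, as the paper remarks in the analogous place in Section~\ref{sec:trivial-low}, a strategy that works without exploiting the constant in this way would ``prove'' that all $\mathbf{0}'$-computable sequences are incomplete, which is false. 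The overall frame (diagonalize with witnesses $x_e^i$, enumerate a witness when $\Gamma_e^{a_s}$ outputs $0$ on it, charge invalidations to a bounded budget) is the right intuition, but the substance of the proof is precisely the mechanism that makes the opponent's losses real and arbitrarily amplified, and that is what is missing.
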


Note that, together with the existence result proved above (Theorem~\ref{thm:existence}),  this theorem provides an answer to classical \emph{Post's problem}, the question whether non-complete enumerable undecidable sets exist.
\smallskip

The rest of the section is devoted to the proof of Theorem~\ref{thm:incompleteness}.

\subsubsection*{The Game Template.}

To make the proof of this theorem more intuitive, we first reformulate it in terms of a two-player game. Imagine that we want to prove that all $\KPt$-trivial sequences have some property $P$, and our opponent wants to show that we are wrong, i.e., to construct a $\KPt$-trivial sequence $a$ that does not have the property $P$. We already know that all $\KPt$-trivials are $\mathbf{0}'$-computable, so we may assume that our opponent presents a $\mathbf{0}'$-computable sequence~$a$ as a computable pointwise approximation (using Shoenfield's limit lemma). At every moment of the game the opponent chooses some values $a_i$; they may be changed during the game, but for each $i$ the number of changes in $a_i$ during the game should be finite, otherwise the opponent loses.

We want to show that the sequence constructed by the opponent is either not $\KPt$-trivial or has property $P$. For that we challenge the opponent by building a semimeasure~$\mu$ on integers by gradually increasing the weights of each integer. Recall the proof of Theorem~\ref{thm:existence}; now the opponent tries to certify that $a$ is $\KPt$-trivial and is therefore in the same position in which we were in that proof. In other words, he is obliged to match our increases along the path $a$, i.e., he must construct a semimeasure~$\nu$ on strings such that $\nu((a)_n) \geq \mu(n)/O(1)$ for the limit sequence $a$. At the same time the opponent needs to ensure that the limit sequence does not have the property $P$. In other terms, the opponent wins if (1)~the limit sequence exists and does not have the property $P$; (2)~$\nu$ is a semimeasure (the sum of all weight increases is bounded by $1$); (3)~$\nu((a)_n) \geq \mu(n)/O(1)$.

If we have a computable winning strategy in this game, then every $\KPt$-trivial sequence $a$ has property $P$. Indeed, assume that there exists some $\KPt$-trivial sequence that does not have this property. Then, by Theorem~\ref{thm:trivial-halting-computable}, the opponent can present this sequence as a computable pointwise approximation, and also use increasing approximations to $\m$ (on strings) for $\nu$. Our computable strategy will then generate some lower semicomputable semimeasure $\mu$, and the inequality $\nu((a)_n) \geq \mu(n)/O(1)$ is guaranteed by the maximality of $\m$ and the triviality of $a$, so the opponent wins against our winning strategy---a contradiction. 

\begin{remark}
One may also note that if the opponent has a computable winning strategy in the game, then there exists a $\KPt$-trivial sequence $a$ that does not have the property $P$. Indeed, let this strategy play and win against $\m$ (as $\mu$); the resulting sequence will be $\KPt$-trivial and will not have the property $P$. So the question whether all $\KPt$-trivial sequences have property $P$ or not can be resolved by providing a computable winning strategy for one of the players. 
\end{remark}

\subsubsection*{The Game for Theorem~\ref{thm:incompleteness}.}

We follow this scheme (in slightly modified form) and consider the following game. The opponent approximates some sequence~$a$ by changing the values of Boolean variables $a_0,a_1,a_2,\ldots$, so $a(i)$ is the limit value of $a_i$. (For the $i$th bit of $a$ we use the notation $a(i)$ instead of usual $a_i$, since $a_i$ is used as the name of $i$th variable.) He also assigns increasing weights to strings; the total weight should not exceed $1$. We assume that initially all weights are zeros. We also assume that the initial values of the $a_i$'s are zeros (just to be specific).

We assign increasing weights to integers (\emph{lengths}); the sum of our weights is also bounded by $1$. Since the property $P$ says that $a$ computes all enumerable sets, we also challenge this property and construct some set $W$ by irreversibly adding elements to it.
\smallskip

The opponent wins the game if
\begin{itemize}
\item each variable $a_i$ is changed only finitely many times (so some limit sequence $a$ appears);
\item the (opponent's) limit weight of $(a)_i$, the $i$-bit prefix of $a$, is greater than our limit weight of $i$, up to some multiplicative constant;
\item the set $W$ is Turing-reducible to $a$.
\end{itemize}

Again, it is enough to show that we can win this game using a  computable winning strategy. Indeed, assume that some $\KPt$-trivial $a$ computes $\mathbf{0}'$. We know that $a$ is limit computable, so the opponent can computably approximate it, and at the same time approximate from below the a priori probabilities $\m(s)$ for all strings $s$ (ignoring our moves). Our strategy will then behave computably, generating some lower semicomputable semimeasure on lengths,  and some enumerable set~$W$. Then, according to the definition of the game, either this semimeasure is not matched by $\m((a)_i)$, or $W$ is not Turing-reducible to $a$. In the first case $a$ is not $\KPt$-trivial; in the second case $a$ is not Turing-complete.

\subsubsection*{Reduction to a Game with Fixed Machine and Constant.}
How can we computably win this game? First we consider a simpler game where the opponent has to declare in advance some constant $c$ that relates the semimeasures constructed by the two players, and the machine $\Gamma$ that reduces $W$ to $a$. Imagine that we can win this game: assume that for each $c$ and $\Gamma$ we have a uniformly computable strategy that wins in this $c$-$\Gamma$-game, defined in a natural way. Since the constant $c$ in the definition of the $c$-$\Gamma$-game is arbitrary, we may use $c^2$ instead of $c$ and assume by scaling that we can force the opponent to spend more than $1$ while using only $1/c$ total weight and allowing him to match our moves up to factor $c$.

Now we mix the strategies for different $c$ and $\Gamma$ into one strategy. Note that two strategies that simultaneously increase weights of some lengths can only help each other, so we only need to ensure that the sum of the increases made by all strategies is bounded by~$1$.  More care is needed for the other condition related to the set $W$. Each of the strategies constructs its own $W$, so we should isolate them. For example, to mix two strategies, we split $\mathbb{N}$ into two parts $N_1$ and $N_2$, say, odd and even numbers, and let the first and second strategy construct a subset of $N_1$ and $N_2$, respectively. Of course, then each strategy is not required to beat the machine $\Gamma$; it should beat its restriction to $N_i$ (the composition of $\Gamma$ and the embedding of $N_i$ into~$\mathbb{N}$). In a similar way we can mix countably many strategies (splitting $\mathbb{N}$ into countably many infinite sets in a computable way). 

It remains to consider some computable sequence $c_i>0$ such that ${\sum 1/c_i \le 1}$ and a computable sequence $\Gamma_i$ that includes every machine $\Gamma$ infinitely many times. (The latter is needed because we want every $\Gamma$ to be beaten with arbitrarily large constant $c$.) Combining the strategies for these games as described, we get a computable winning strategy for the full game.

When constructing a wnning strategy for the $c$-$\Gamma$-game, it is convenient to scale this game and require the opponent to match our weights exactly (without any factor) but allow him to use total weight $c$ instead of~$1$.  We will prove the existence of the winning strategy by induction: assuming that a strategy for some $c$ is given,  we construct a strategy for a bigger~$c'$. Let us first construct the strategy for $c<2$. 

\subsubsection*{Winning a Game with $c<2$: Strong Strings.}

This winning strategy deals with some fixed machine $\Gamma$ and ensures $\Gamma^a\ne W$ at one fixed point, say, $0$ (i.e., the strategy ensures that $0\in\Gamma^a\not\Leftrightarrow0\in A$); the other points are not used. Informally, we wait until the opponent puts a lot of weight on strings that imply $0\notin \Gamma^a$. If this never happens, we win in one way; if it happens, we then add $0$ to $W$ and win in a different way.

Let us explain this more formally. We say that a string $u$ is \emph{strong} if it (as a prefix of $a$) enforces that $\Gamma^a(0)$ is equal to $0$, i.e., $\Gamma$ outputs $0$ on input $0$ using only oracle answers in $u$. Simulating the behavior of $\Gamma$ for different oracles, we can enumerate all strong strings. During the game we look at the following quantity:
\begin{quote}
\emph{the total weight that our opponent has put on all known strong strings}.
\end{quote}
This quantity may increase because the opponent distributes more weight or because we discover new strong strings, but it never decreases. We try to force the opponent to increase this quantity (see below how). As we shall see, if he refuses, he loses the game, and the element $0$ remains outside $W$. If the quantity comes close to $1$, we change our mind and add $1$ into $W$. After that all the weight put on strong strings is lost for the opponent: they cannot be the prefixes of $a$ such that $\Gamma^a=W$, if $1\in W$. So we can make our total weight equal to $1$ in an arbitrary way (adding weight somewhere if the total weight was smaller than~$1$), and to counter this the opponent needs to use additional weight $1$ along some final $a$ that avoids all strong strings, therefore his weight comes close to $2$.

\subsubsection*{Winning a Game with $c<2$: Gradual Increase.}

So our goal is to force the opponent to increase the total weight of strong strings (or nodes, since we identify strings with nodes in the full binary tree).  Let us describe our strategy as a set of substrategies (processes) that run in parallel. For each strong node $x$ there is a process $P_x$; we start it when we discover that $x$ is strong. This process tries to force the opponent to increase the weight of some extension of $x$, i.e., some node above $x$ (this node is automatically strong); we want to make the lengths of these strings different, so for every string $x$ we fix some number $l_x$ that is greater than $|x|$, the length of $x$.

The process $P_x$ is activated when the current path (i.e., the characteristic function of the current approximation to~$a$) goes through node $x$. Otherwise $P_x$ sleeps; it may happen that some $P_x$ never becomes active. When awake, $P_x$ always sees that the current path goes through $x$. The process $P_x$ gradually increases the weight of length $l_x$: it adds some small $\delta_x$ to the weight of $l_x$ and waits until the opponent matches\footnote{A technical remark: note that in our description of the game we have required that the opponent's weight along the path is strictly greater than our weight: if this is true in the limit, it happens at some finite stage.} this weight along the current path (whatever this path is), then increases the weight again by $\delta_x$, etc. The value of $\delta_x$ is fixed for each $x$ in such a way that $\sum_x \delta_x$ is small (formally: we need it to be smaller than~$1-c/2$).  The process repeats this increase by $\delta_x$ until it gets a termination signal from the supervisor (see below for the conditions when this happens). Note that at any moment the current path may change in such a way that~$x$ is not an initial segment of it anymore. Then $P_x$ is suspended and wakes up only when, due to a later change of the current path, $x$ lies on it again (and this may never happen).

The supervisor sends the termination signal to all the processes when (and if) the total weight they have distributed goes above a fixed threshold close to~$1$ (formally, we need this threshold to be greater than $c/2$). After that the strategy adds $0$ to $W$, as explained above. 

Let us show that this is indeed a winning strategy. Consider a game where it is used. By construction, we do not violate the weight restriction. If the opponent has no limit path, he loses, so we can assume that some limit path~$a$ exists. There are two possible cases:

\begin{itemize}
\item Case 1: The processes $P_x$ never reach the threshold for the total weight used, and no termination signal is ever sent (thus $0$ never enters~$W$). This can be because of two reasons:
	\begin{itemize}
	\item There is no strong node on the limit path~$a$. In this case, the opponent loses because $\Gamma^a(0) \ne 0$ while $0 \notin W$, so $\Gamma^a \ne W$. 
	\item There exists a strong node $x$ on the limit path~$a$, and its associated process $P_x$ remains active from some point onward, but the opponent refuses to match our weight at length $l_x$. In this case the opponent fails to match our weight on some prefix of~$a$, and thus loses. 
	\end{itemize}
\item Case 2: The processes $P_x$ do reach the fixed total threshold, the termination signal is sent, and $0$ is added to~$W$. As we observed, all the weight we put on lengths was matched by our opponent on strong nodes, except for some small amount (at most $\sum_x \delta_x$). Now all the opponent's weight is lost since he must change the path and the limit path does not have strong prefixes. Then we distribute the remaining weight arbitrarily and win.
\end{itemize}

This finishes the explanation on how to win the $c$-$\Gamma$-game for $c<2$.

\subsubsection*{Induction Statement.}

The idea of the induction step is simple: instead of forcing the weight increase for some extension of a strong node $u$ directly, we recursively call the described strategy at the subtree rooted at $u$, adding or not adding some other element to $W$ instead of $0$. This cuts our costs almost in half, since we know how to win the game with $c$ close to~$2$. In this way we can win the game for arbitrary $c<3$, and so on.

To be more formal, we consider a recursively defined process  
$P(k,x,\alpha, L, M)$ with the following parameters:
\begin{itemize}
\item $k>0$ is a rational number, the required coefficient of weight increase;
\item $x$ is the root of the subtree where the process operates;
\item $\alpha>0$ is also a rational number, our ``budget'' (how much weight we are allowed to use);
\item $L$ is an infinite set of integers (lengths where our process may increase weight);\footnote{To use infinite sets as parameters, we should restrict ourselves to some class of infinite sets. For example, we may consider infinite decidable sets and represent them by programs enumerating their elements in increasing order.}
\item $M$ is an infinite set of integers (numbers that our process is allowed to add to~$W$).
\end{itemize}

The process can be started or resumed only when $x$ is a prefix of the current path $a$, and is suspended when $a$ changes and this is no more true. The process then sleeps until $x$ is an initial segment of the current path again. It is guaranteed that $P$ never violates the rules (about $\alpha$, $L$, and $M$). Running in parallel with other processes (as part of the game strategy) and assuming that other processes do not touch lengths in $L$ and numbers in $M$, the process $P(k,x,\alpha,L,M)$ guarantees, if not suspended forever or terminated externally, that one of the following possibilities is realized:
\begin{itemize}
\item the limit path~$a$ does not exist;
\item $W\ne \Gamma^a$ for limit $a$;
\item the opponent never matches some weight put on some length in $L$;
\item the opponent spends more than $k\alpha$ weight on nodes above $x$ with lengths in~$L$.
\end{itemize}
    
\subsubsection*{Base Case: $k<2$.}     
     
Now we can adapt the construction of the previous section and construct a process $P(k,x,\alpha,L,M)$ with these properties for arbitrary $k<2$. For each $y$ above $x$ we select some $l_y\in L$ greater than the length of $y$, different for different $y$, and also select some positive $\delta_y$ such that $\sum\delta_y$ is small compared to the budget $\alpha$. We choose some $m\in M$ and consider $y$ (a node above $x$) as \emph{strong} if it guarantees that $\Gamma^a(m)=0$. Then for all strong $y$ we start the process $P_y$ that  is activated when $y$ is in the current path and increases the weight of $l_y$ in $\delta_y$-steps waiting until the opponent matches it. We terminate all the processes when (and if) the total weight used becomes close to $\alpha$, and then add $m$ to $W$, thus rendering useless all the weight placed by the opponent on strong nodes.

The restrictions are satisfied by the construction. Let us check that the declared goals are achieved. If there is no limit path, there is nothing to check. If the limit path $a$ does not go through $x$, we have no obligations (the process is suspended forever). So we assume that the limit path goes through $x$. 

Assume first that the total weight used by all $P_y$ did not come close to $\alpha$, so the termination signal was not sent. In this case $m\notin W$.  If there is no strong node on the limit path $a$, then $\Gamma^a(m)$ is not $0$, so $W\ne\Gamma^a$. If there is a strong node $y$ on the limit path, then the process $P_y$ was started and worked without interruptions, starting from some moment. So either some of the $\delta_y$-increases was not matched (third possibility) or the termination signal was sent (so there are no obligations). 

It remains to consider the case when $m$ was added to $W$ and termination signal was sent to all $P_y$.  In this case the total weight used is close to $\alpha$, and after adding $m$ to $W$ it is lost, so either our weight is not matched or almost $2\alpha$ is spent by the opponent on nodes above $x$ (recall that all processes are active only when the current path goes through~$x$). 

\subsubsection*{Induction Step.}

The induction step is similar: we construct the process $$P(k,x,\alpha,L,M)$$ in the same way as for the induction base. The difference is that instead of $\delta_y$-increasing the weight of $l_y$ the process $P_y$ now recursively calls $$P(k',y,\delta_y\,,L',M')$$  with some smaller $k'$, say, $k'=k-0.5$, the budget $\delta_y$, and some $L'\subset L$ and $M'\subset M$. If the started process forces the opponent to spend more than $k'\delta_y$ on the nodes above $y$ with lengths in $L'$ and terminates, then a new process $$P(k',y,\delta_y\,,L'',M'')$$ is started for some other $L''\subset L$ and $M''\subset M$, etc. All the subsets $L',L'',\ldots$ should be disjoint, and also disjoint for different $y$, as well as $M',M'',\ldots$. So we should first of all split $L$ into a sum of disjoint infinite subsets $L_y$ parametrized by $y$ and then split each $L_y$ into $L_y'+L_y''+\ldots$ (for the first, second, etc. recursive calls). The same is done for $M$, but here, in addition to the sets $M_y$, we select some~$m$ outside all $M_y$. We add this~$m$ to $A$ when our budget is exhausted (thus forcing the opponent to spend more weight). As before, strong nodes are defined as those that guarantee $\Gamma^a(m)=0$. 

We start the processes $P_y$ as described above: each of them makes a potentially infinite sequence of recursive calls with the same $k'=k-0.5$ and budget $\delta_y$. The process $P_y$ is created for each discovered strong node $y$, but is sleeping while $y$ is not on the current path. We take note of the total weight used by all $P_y$ (for all $y$) and send a termination signal to all $P_y$ when this weight comes close to the threshold $\alpha$, so it never crosses this threshold.

Let us show that we achieve the declared goal, assuming that the recursive calls fulfill their obligations.  First, the restrictions about $L$, $M$ and $\alpha$ are guaranteed by the construction. If there is no limit path, we have no other obligations. If the limit path exists but does not go through $x$, our process will  be suspended externally, and again we have no obligations. So we may assume that the limit path goes through $x$, and that our process is not  terminated externally. If the weight used by all $P_y$ did not cross the threshold, and the limit path does not go through any strong node (defined using $m$), then $W\ne \Gamma^a$ for the limit path $A$, since $m\notin W$ and $\Gamma^a(m)$ does not output $0$. If the limit path goes through some strong $y$, the process $P_y$ will be active starting from some point onward, and makes recursive calls $P(k',y,\delta_y\,,L',M')$, $P(k',y,\delta_y\,,L'',M'')$, etc. Now we use the inductive assumption and assume that these calls achieve their declared goals. Consider the first call. If it succeeds by achieving one of three first alternatives (among the four alternatives listed above), then we are done. If it succeeds by achieving the fourth alternative, i.e., by forcing the opponent to spend more than $k'\delta_y$ on the weights from $L'$, then the second call is made, and again either we are done or the opponent spends more than $k'\delta_y$ on the weights from $L''$. And so on: at some point we either succeed globally, or exhaust the budget and our main process sends the termination signal to all $P_y$.  So it remains to consider the latter case. Then all the weight spent, except for the $\delta_y$'s for the last call at each node, is matched by the opponent with factor $k'$, and on the final path the opponent has to match it with factor $1$, so we are done (assuming that $k<k'+1$ and $\sum_y \delta_y$ is small enough).

This finishes the induction step, so we can win every $c$-$\Gamma$-game by calling the recursive process at the root. As we have explained, this implies that  $\KPt$-trivial sets do not compute $\mathbf{0}'$.

\section{$\KPt$-trivial Sequences Are $\KPt$-low}\label{sec:trivial-low}

Now we want to prove the promised stronger result~\cite{nies-main}:

\begin{theorem}
All $\KPt$-trivial sequences are $\KPt$-low.
\end{theorem}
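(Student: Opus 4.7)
The plan is to follow the game template from the previous sections. By the coding theorem relativized to $a$, it suffices to build, for the universal enumeration of axioms $(x,q,\alpha)$ whose supremum along $a$ defines $\m^a(x)$, a lower semicomputable (unrelativized) semimeasure $\sigma$ with $\sigma(x) \geq \m^a(x)/O(1)$. I would set up a game in which the opponent (i) limit-approximates $a$ via Shoenfield, flipping each bit only finitely often, (ii) distributes increasing weights $\nu$ on strings, pledging $\nu((a)_n)\geq\mu(n)/O(1)$ against the length-semimeasure $\mu$ that we construct as a $\KPt$-triviality challenge, and (iii) enumerates the axioms of the universal operator defining $\m^a$. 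We win if either the opponent violates the $\nu$-pledge, or $\sigma$ dominates $\m^a$ up to a constant. A computable winning strategy is again sufficient, since playing it against the natural lower semicomputable approximations of $\m$, $\nu$ and the axioms produces, for every $\KPt$-trivial $a$, a lower semicomputable $\sigma$ dominating $\m^a$, proving $\KPt$-lowness.

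The key mechanism is a cost-based coupling between the three approximations. Whenever we see an axiom $(x,q,\alpha)$ with $\alpha$ a prefix of the current approximation $a_t$, we wish to reflect its weight $q$ into $\sigma(x)$. A later change of $a$ below $|\alpha|$ would waste this commitment, so we protect each such $\sigma$-increase by pairing it with a matching increase of $\mu$ at some fresh length $n>|\alpha|$. The opponent's pledge then forces him to put $\nu$-weight on a node extending $\alpha$, and if he later moves $a$ off $\alpha$ he loses that $\nu$-weight and must replace it along the new path. Since his total $\nu$-budget is bounded by $1$, only finitely many such invalidations can occur, so for every $\alpha$ that is genuinely a prefix of the limit $a$ our $\sigma$-commitments are eventually stable.

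To turn this into a full winning strategy I would stratify the axioms by weight level (dyadic intervals $q\in[2^{-k-1},2^{-k}]$) and run, for each stratum and each candidate use $\alpha$, a process analogous to the recursive $P(k,x,\alpha,L,M)$ of the previous section: a level-$k$ process commits a small slice of $\mu$-budget against the opponent's $\nu$-pledge above $\alpha$, and on being invalidated spawns a fresh attempt at a higher cost level, with length and point reservations kept disjoint across strata, nodes, and retries. The main obstacle, and the heart of Nies's original ``decanter'' or ``golden run'' construction, is the combinatorial bookkeeping: several axioms firing at overlapping or nested uses must not be allowed to charge the same unit of opponent's $\nu$-loss twice, so the level hierarchy and the disjoint reservations have to be arranged so each invalidation pays for at most one $\sigma$-commitment. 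Once the accounting is in place, verifying that $\sigma$ is a semimeasure and that $\sigma(x)\geq\m^a(x)/O(1)$ along the true limit path reduces to the finite-cost observation that the opponent can sustain only boundedly many invalidations before exhausting his $\nu$-budget.
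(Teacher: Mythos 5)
Your game setup and the reduction to a computable winning strategy follow the paper's template, and representing $\m^a$ by axioms $(x,q,\alpha)$ is essentially equivalent to the paper's device of a computable tree labelling. But the core accounting argument has a genuine gap, and it is exactly the gap the paper warns about. Your claim that pairing each $\sigma$-commitment one-for-one with a $\mu$-increase at a fresh length, and then observing that ``only finitely many invalidations can occur'' because the opponent's $\nu$-budget is bounded by $1$, cannot carry the proof: the opponent only has to match your $\mu$-weights up to an arbitrary, unknown constant, so his effective budget is $c$ (for a $c$ you do not control) while yours is $1$. With one-for-one coupling, each path change wastes about as much of your budget as of his, so already for $c\ge 2$ he can exhaust your budget on wasted commitments and still afford to match everything along the final path. ``Finitely many'' or ``boundedly many'' invalidations is true but useless --- the issue is quantitative, and this naive strategy, if it sufficed, would prove that every $\mathbf{0}'$-computable sequence is $\KPt$-low, which is false.

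What is missing is precisely the mechanism that constitutes the paper's proof: (1) reduce first to a game in which the constant $c$ is declared in advance, and recover the full theorem by scaling and combining the strategies for $c=2^{2k}$; (2) build, by induction on $c$, subtree strategies $S(c,x,\alpha,L)$ which, whenever they halt, force the opponent to spend more than $c\alpha$ above $x$ --- the base case $c<2$ is your coupling, and each induction step gains almost $+1$ by replacing single weight increments with sequential recursive calls of the $(c-0.5)$-strategy, so that the weight wasted off the limit path has already been matched with factor $c-0.5$ before it must be matched once more along the limit path; (3) the domination $\sigma\ge\varepsilon\,\m^a$ comes not from ``eventual stability'' of individual commitments but from the golden-run dichotomy: if the total weight never reaches the threshold, some recursive process sitting on the limit path is never interrupted and therefore fulfills \emph{every} request along that path with a fixed factor. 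Your last paragraph names the decanter/golden-run bookkeeping but explicitly defers it (``once the accounting is in place''), and your stated reduction (``boundedly many invalidations before exhausting his $\nu$-budget'') just restates the one-for-one argument that fails; the amplification hierarchy and the golden-run case analysis are the heart of the theorem and are still missing from the proposal.
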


\begin{proof}
In this theorem the property $P$ that we want to establish for an arbitrary $\KPt$-trivial sequence $a$ says that $\KP^a(x)\ge \KP(x)-O(1)$ for all $x\in\mathbb{B}^*$, or that $$\m^a(x)\le \m(x)\cdot O(1) \text{ for all $x\in\mathbb{B}^*$}.$$ Let us represent $\m^a(\cdot)$ in the following convenient way. The sequence $a$ is a path in a full binary tree. Imagine that at every node of the tree there is a label of the form $(i,\eta)$ where $i$ is an integer, and $\eta$ is a non-negative rational number. This label is read as ``please add $\eta$ to the weight of $i$''. We assume that the labelling is computable. We also require that for every path in the tree the sum of all rational numbers along the path does not exceed $1$. Having such a labelling, and a path $a$, we can obey all the labels along the path, and obtain a semimeasure on integers. This semimeasure is semicomputable with oracle $a$.

This construction is general in the following sense. Consider a machine $M$ that generates a lower semicomputable discrete semimeasure $\m^a$ when given access to an oracle $a$.  We can find a computable labelling that gives the same semimeasure  $\m^a$ (in the way described) for every oracle $a$. (Note that, according to our claim, the labelling does \emph{not} depend on the oracle.)  Indeed, we may simulate the behavior of $M$ for different oracles $a$, and look at the part of $a$ that has been read when some increase in the output semimeasure happens. This can be used to create a label $(i,\eta)$ at some tree node $u$: the number $i$ is where the increase happened, $\eta$ is the size of the increase, and $u$ is the node that guarantees all the oracle answers used before the increase happened. We need to make the labelling computable; also, according to our assumption, each node has only one label (adds weight only to one object). Both requirements can be easily fulfilled by postponing the weight increase: we push the queue of postponed requests up the tree. If the sum of the increase requests along some path $a$ becomes greater than $1$, this means that for this path $a$ we do not obtain a semimeasure. As usual, we can trim the requests and guarantee that we obtain semimeasures along all paths, without changing the existing valid semimeasures. 

We may assume now that some computable labelling is fixed that corresponds to the universal machine: for every path $a$ the semimeasure resulting from fulfilling all requests along $a$, equals $\m^a$.

\subsubsection*{Game Description.}

As in the previous section, we prove the theorem by showing the existence of a winning strategy in some game, which follows the same template. 

As before, the opponent approximates some sequence~$a$ by changing the values of Boolean variables $a_0,a_1,a_2,\ldots$ and assigns increasing weights to strings; the total weight should not exceed~$1$ (we again assume that initially all weights and the values of the $a_i$'s are zeros) while we assign increasing weights to integers (\emph{lengths}); the sum of our weights is also bounded by $1$. 
\smallskip

Moreover (this is the part of the game tailored for the theorem to be proven), throughout the game we also assign increasing weights to another type of integers, called \emph{objects}: on these we compare our semimeasure with the semimeasure $\m^a$ determined by the opponent's limit path $a$.\footnote{Formally speaking, we construct two semimeasures on integers; to avoid confusion, it is convenient to call their arguments ``lengths'' and ``objects''.}

\medskip

The opponent wins the game if all of the following three conditions are satisfied:
\begin{itemize}
\item the limit sequence $a$ exists;
\item the opponent's semimeasure along the path exceeds our semimeasure on lengths up to some constant factor, i.e., there exists some $c>0$ such that for all~$i$ the opponent's weight of the prefix $(a)_i$ is greater than our weight of $i$ divided by $c$; for brevity we say in this case that the opponent's semimeasure $*$-\emph{exceeds} our semimeasure.
\item our semimeasure on objects does not \hbox{$*$-exceed} $\m^a$.
\end{itemize}

Once again it is enough to construct a computable winning strategy in this game. Also, as in the previous section, we can consider an easier (for us) version of the game where the opponent starts the game by declaring some constant~$c$ that he plans to achieve for the second condition, and we need to beat only this~$c$. If we can win this game for any~$c=2^{2k}$ declared in advance, then by scaling we can win the $2^k$-game using only $2^{-k}$ of our capital, and it then suffices to combine all the corresponding strategies (we also assume that the total weight on objects for the $k$th strategy is bounded by $2^{-k}$, but this is for free, since we only need to $*$-exceed $\m^a$ without any restrictions on the constant). So it remains to win the game for each~$c$. And again, it is convenient to scale that game and assume that the opponent needs to match our weights on lengths exactly (not up to $1/c$-factor) while his total weight is bounded by $c$ (not~$1$).

\subsubsection*{Winning the Game for $c<2$.}

For $c=1$ the game is trivial, since we require that the opponent's weight along the path is strictly greater than our weight on lengths, so it is enough to assign weight~$1$ to some length. We start our proof by explaining the strategy for the case $c<2$.

The idea can be explained as follows.  The na\"\i ve strategy is to assume all the time that the current path~$a$ is final, and to just assign the weights to objects according to $\m^a$, computed based on the current path $a$. (In fact, at each moment we look at some finite prefix of $a$ and follow the labels that appear on this prefix.)  If indeed $a$ never changes, this is a valid strategy: we achieve $\m^a$, and never exceed the total weight $1$ due to the assumption about the labels. But if the path suddenly changes, then all the weight placed because of nodes on the old path which are now outside the new path, is lost. If we now follow all the labels on the new path, then our total weight  on objects may exceed $1$ (the total weight was bounded only along every path individually, but now we have placed weight according to labels both on the old path and on the new path).

There is some partial remedy: we may match the weights only up to some constant, say, use only $1\%$ of what the labels ask. This is possible since the game allows us to match the measure with arbitrary constant factor. This way we can tolerate up to $100$ changes in the path (each new path generates new weight of at most $0.01$). However, this does not really help since the number of changes is (of course) unbounded. In fact, the strategy described so far \emph{must} fail, as otherwise it would prove that all $\mathbf{0}'$-computable sets are $\KPt$-low, which is certainly not the case. For a successfull proof we must take advantage of the fact that $a$ is $\KPt$-trivial.

How can we discourage the opponent from changing the path? Like in the previous proof we may assign a non-zero weight to some length and wait until the opponent matches this weight along the current path. This provides an incentive for the opponent not to leave a node where he has already put weight: if he does, this weight would be wasted, and he would be forced to put the same weight along the final path a second time. After that we may act as if the final path goes through this node and follow the labels (as described). Doing this, we know at least that if later the path changes and we lose some weight, the opponent loses some weight, too. This helps if we are careful enough.

Let us explain the details. It would be convenient to represent the strategy as a set of parallel processes: for each node~$x$ we have a process~$P_x$ that is awake when $x$ is a prefix of the current path, and sleeps when $x$ is not. When awake, the process $P_x$ tries to create the incentive for the opponent not to leave~$x$, by forcing him to increase the weight of some node above~$x$. To make the processes more independent and to simplify the analysis, let us assume that for every node $x$ some length $l_x\ge |x|$ is chosen, lengths assigned to different nodes are different, and $P_x$ increases only the weight of $l_x$.

Now we are ready to describe the process~$P_x$. Assume that node $x$ has label $(i,\eta)$ that asks to add $\eta$ to the weight of object $i$. The process $P_x$ increases the weight of $l_x$, adding small portions to it and waiting after each portion until the opponent matches this increase along the current path (i.e., in the $l_x$-bit prefix of the current path). If and when the weight of $l_x$ reaches $\varepsilon\eta$ (where $\varepsilon$ is some small positive constant; the choice of $\varepsilon$ depends on $c$, see below), the process increases the weight of object $i$ by $\varepsilon\eta$ as well and terminates. 

The processes $P_x$ for different nodes $x$ run in parallel independently, except for one thing: just before the total weight spent by all processes together would exceed $1$, we terminate them, blocking the final weight increase that would have brought the total weight above $1$. After that our strategy stops working and hopes that the opponent would be unable to match already existing weights not crossing the threshold $c$.

Concerning the small portions of weight increases mentioned above: for each node $x$ we choose in advance the size $\delta_x$ of the portions used by $P_x$, in such a way that $\sum_x \delta_x <\varepsilon$. Note that here we use the same small $\varepsilon$ as above. In this way we guarantee that the total loss (caused by last portions that were not matched because the opponent changes the path instead and does not return, so the process is not resumed) is bounded by $\varepsilon$.

It remains to prove that this strategy wins the $c$-game for~$c$ close to $2$, assuming that $\varepsilon$ is small enough. First note two properties that are true by construction:
\begin{itemize}
\item the sum of our weights for all lengths does not exceed $1$;
\item at every moment the sum of (our) weights for all objects does not exceed the sum of (our) weights for all lengths.
\end{itemize}
Indeed, we stop the strategy just before violating the first requirement, and the second is guaranteed for each $x$-process and therefore for the entire strategy.

If there is no limit path, the strategy wins the game by definition. So assume that a limit path~$a$ exists. Now we count separately the weights used by processes $P_x$ for $x$'s on the limit path $a$, and for others (incomparable with $a$). Since the weights for $x$ are limited by $\varepsilon\cdot($the request in $x)$, and the sum of all requests along $a$ is at most $1$, the sum of the weights along $a$ is bounded by $\varepsilon$. Now there are two possibilities: either the strategy was stopped when trying to cross the threshold, or it runs indefinitely. 

In the first case the total weight is close to $1$: it is at least $1-\varepsilon$, since the next increase will cross $1$, and all the portions $\delta_x$ are less than $\varepsilon$. So the weight used by processes outside $a$ is at least $1-2\varepsilon$, and if we do not count the last (unmatched) portions, we get at least $1-3\varepsilon$ of weight that the opponent needs to match twice: it was matched above $x$ for $P_x$, and then should be matched again along the limit path (that does not go through $x$; recall that we consider the nodes outside the limit path). So the opponent needs to spend at least $2-6\varepsilon$, otherwise he loses.

In the second case each process $P_x$ for $x$ on the limit path is awake starting from some point onward, and is never stopped, so it reaches its target value $\varepsilon\eta$ and adds $\varepsilon\eta$ to the object $i$, if $(i,\varepsilon)$ is the request in node $x$. So our weights on the objects match $\m^a$ for limit path $a$ up to factor $\varepsilon$, and the opponent loses. We know also that the total weight on objects does not exceed $1$, since it is bounded by the total weight on lengths.

We therefore have constructed a winning strategy for the $2-6\varepsilon$ game, and by choosing a small $\varepsilon$ we can win the $c$-game for any given $c<2$.

\subsubsection*{Using This Strategy on a Subtree.}

To prepare ourselves for the induction, let us look at the strategy previously described and modify it for use inside a subtree rooted at some node $x$. We also scale the game and assume that we have some budget $\alpha$ that we are allowed to use (instead of total weight $1$). To guarantee that the strategy does not interfere with other actions outside the subtree rooted at~$x$, we agree that it uses lengths only from some infinite set $L$ of length and nobody else touches these lengths. Then we can assign $l_y\in L$ for every $y$ in the subtree and use them as before. 

Let us describe the strategy in more details. It is composed of processes $P_y$ for all $y$ above $x$. When $x$ is not on the actual path, all these processes sleep, and the strategy is sleeping. But when the path goes through~$x$, some processes~$P_y$ (for $y$ on the path) become active and start increasing the weight of length~$l_y$ by small portions $\delta_y$ (the sum of all $\delta_y$ now is bounded by $\alpha\varepsilon$, since we scaled everything by $\alpha$). A supervisor controls the total weight used by all $P_y$, and as soon as it reaches $\alpha$, terminates all $P_y$. When the process $P_y$ reaches the weight $\alpha\varepsilon\eta$, it increases the weight of object $i$ by $\alpha\varepsilon\eta$ (here $(i,\eta)$ is the request at node $y$). So everything is as before, but scaled by $\alpha$ and restricted to the subtree rooted at~$x$.
\smallskip

What does this strategy guarantee?
\begin{itemize}
\item The total weight on lengths used by it is at most $\alpha$.
\item The total weight on objects does not exceed the total weight on lengths.
\item If the limit path  $a$ exists and goes through $x$, then either
   \begin{itemize}
   \item the strategy halts and the opponent either fails to match all the weights or spends more than $c\alpha$ on the subtree rooted at~$x$; or
   \item the strategy does not halt, and the semimeasure on objects generated by this strategy $*$-ex\-ceeds $\m^a$, if we omit from $\m^a$ all the requests on the path to $x$.
   \end{itemize}
\end{itemize}
The argument is the same as for the full tree: if the limit path exists and the strategy does not halt, then all the requests along the limit path (except for finitely many of them below $x$) are fulfilled with coefficient $\alpha\varepsilon$. If the strategy halts, the weight used along the limit path does not exceed $\alpha\varepsilon$ (since the sum of requests along each path is bounded by $1$). The weight used in the other nodes of the $x$-subtree is at least $\alpha(1-2\varepsilon)$, including at least $\alpha(1-3\varepsilon)$ matched weight that should be doubled along the limit path, and we achieve the desired goal for $c=2-6\varepsilon$.

\begin{remark}
 In the statement above we have to change $\m^a$ by deleting the requests on the path to $x$. We can change the construction by moving requests up the tree when processing node $x$ to get rid of this problem. One may also note that omitted requests deal only with finitely many objects, so one can average the resulting semimeasure with some semimeasure that is positive everywhere. So we may ignore this problem in the sequel.
\end{remark}

\subsubsection*{How to Win the Game for $c<3$.}

Now we make the crucial step: we show how one can increase $c$ by recursively using our strategies. Recall our strategy for $c<2$, and change it in the following ways:
\begin{itemize}
\item Instead of assigning some length $l_x$ for each node $x$, let us assign an infinite (decidable uniformly in $x$) set $L_x$ of integers; all elements should be greater than $|x|$ (the length of $x$) and for different $x$ these sets should be disjoint (this is easy to achieve).
\item We agree that process $P_x$ (to be defined) uses only lengths from $L_x$.
\item As before $P_x$ is active when $x$ is on the current path, and sleeps otherwise.
\item Previously $P_x$ increased the weight of $l_x$ in small portions, and after each small increase waited until the opponent matched this increase along the current path. Now, instead of that, $P_x$ calls the $x$-strategy described in the previous section, with small $\alpha=\delta_x$, waits until this strategy terminates forcing the opponent to spend almost $2\delta_x$, then calls another instance of the $x$-strategy, waits until it terminates, and so on. For this, $P_x$ divides $L_x$ into infinite subsets $L_x^1+L_x^2+\ldots$, using $L_x^s$ for the $s$th call of an~$x$-strategy, and using $\delta_x$ as the budget for each call. 
\end{itemize}

There are several possibilities for the behavior of an $x$-strategy called recursively. It may happen that it runs indefinitely. This happens when $x$ is an initial segment of the limit path, the $x$-strategy never exceeds its budget $\delta_x$, and the global strategy does not come close to $1$ in its total spending. It this case we win the game, since the part of the semimeasure on objects built by the $x$-strategy is enough to $*$-exceed $\m^a$. This case is called ``the golden run'' in the original exposition of the proof.

If $x$ is not on the limit path, the execution of the $x$-strategy may be interrupted; in this case we only know that it spent not more than its budget, and that the weight used for objects does not exceed the weight used for lengths. This is similar to the case when an increase at $l_x$ was not matched because the path changed.

The $x$-strategy may also terminate. In this case we know that the opponent used almost twice the budget ($\delta_x$) on the extensions of $x$, and a new call of the $x$-strategy is made for another set of lengths. This is similar to the case when the increase at $l_x$ was matched; the advantage is that now the opponent used almost twice our weight.

Finally, the strategy may be interrupted because the total weight used by $x$-processes for all $x$ came close to $1$. After that everything stops, and we just wait until the opponent will be unable to match all the existing weights or forced to use total weight close to $3$. Indeed, most of our weight, except for $O(\varepsilon)$, was used not on the limit path and already matched with factor close to $2$ there --- so matching it again on the limit path makes the total weight close to $3$. 

\subsubsection*{Induction Step.}

Now it is clear how one can continue this reasoning and construct a winning strategy for arbitrary $c$. To get a strategy for some $c$,  we follow the described scheme, and the process $P_x$ makes sequential recursive calls of $c'$-strategies for smaller $c'$. We need $c-c'<1$, so let us use $c'=c-0.5$. More formally, we recursively define a process $S(c, x,\alpha,L)$ where $c$ is the desired amplification, $x$ is a node, $\alpha$ is a positive rational number (the budget), and $L$ is an infinite set of integers greater than $|x|$.\footnote{The pedantic reader may complain that the parameter is an infinite set. It is in fact enough to consider infinite sets from some class, say, decidable sets (as we noted in the previous section), or just arithmetic progressions. Such sets are enough for our purposes and have finite representation. Indeed, an arithmetic progression can be split into countably many arithmetic progressions. For example, $1,2,3,4,\ldots$ can be split into $1,3,5,7,\ldots$ (odd numbers), $2,6,10,14\ldots$ (odd numbers times $2$), $4,12,20,28,\ldots$ (odd numbers times $4$), etc.} The requirements for $S(c,x,\alpha,L)$:

\begin{itemize}
\item It increases only weights of lengths in $L$.
\item The total weight used for lengths does not exceed~$\alpha$.
\item At each step the total weight used for objects does not exceed the total weight used for lengths.
\item Assuming that the process is not terminated externally (this means that $x$ belongs to the current path, starting from some moment), it may halt or not, and:
\begin{itemize}
    \item If the process halts, the opponent uses more that $c\alpha$ on strings that have length in $L$ and are above $x$.
    \item If the process does not halt and the limit path $a$ exists, the part of the semimeasure on objects generated by this process alone is enough to $*$-exceed $\m^a$.
\end{itemize}
\end{itemize}

The implementation of $S(c,x,\alpha,L)$ uses recursive calls of $S(c-0.5,y,\beta,L')$; for each $y$ above $x$ a sequence of those calls is made with $\beta=\delta_y$ and sets $L'$ that are disjoint subsets of $L$ (for different $y$ these $L'$ are also disjoint), similar to what we have described above for the case $c<3$.  \qed
\end{proof}

\section{$\KPt$-low and $\MLR$-low Oracles}

In this section we present one more characterization of $\KPt$-low (or $\KPt$-trivial) sequences: this class coincides with the class of sequences that (being used as oracles) do not change the notion of Martin-L\"of randomness. As almost all notions of computability theory, the notion of Martin-L\"of randomness can be relativized to an oracle $a$; this means that the algorithms that enumerate Martin-L\"of tests now may use the oracle $a$. In this way we get (in general) a wider class of effectively null sets, and therefore fewer but more pronouncedly random sequences. However, for some~$a$, relativizing to $a$ leaves the class of Martin-L\"of random sequences unchanged.

\begin{definition}
A sequence $a$ is $\MLR$-low if every Martin-L\"of random sequence is Martin-L\"of random relative to the oracle $a$.
\end{definition}

The Schnorr--Levin criterion of randomness in terms of prefix complexity shows that if $a$ is $\KPt$-low, then $a$ is also $\MLR$-low. The other implication is also true but more difficult to prove.

\begin{theorem}\label{thm:MLR-K-low}
Every $\MLR$-low sequence is $\KPt$-low.
\end{theorem}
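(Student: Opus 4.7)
My plan is to argue by contraposition: given a sequence $a$ that is not $\KPt$-low, I will exhibit a Martin-L\"of random sequence $b$ that fails to be Martin-L\"of random relative to $a$. Using the labelling representation of $\m^a$ developed at the start of Section~\ref{sec:trivial-low}, together with the Schnorr--Levin criterion, the task reduces to constructing $b$ with $\m(b_0\ldots b_{n-1})\le O(1)\cdot 2^{-n}$ for every $n$ (which, by Schnorr--Levin, certifies that $b$ is ML-random) while $\m^a(b_0\ldots b_{k-1})\ge 2^{c}\cdot 2^{-k}$ for arbitrarily large $c$ and some $k$ (placing $b$ into the intersection of the universal $a$-relativized ML-test $V^a_c=\bigcup\{[x]:\m^a(x)\ge 2^{c-|x|}\}$). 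The hypothesis that $a$ is not $\KPt$-low says exactly that $\m^a(x)/\m(x)$ is unbounded on strings; the content of the proof is to route this unbounded excess along a single infinite path that is itself random.

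I would set this up as a two-player game following the template of the previous sections. The opponent approximates a candidate sequence $a$ (legitimate because MLR-lowness implies $\m^a$-boundedness along $a$, and in particular prevents $a$ from computing $\mathbf{0}'$, so a $\Delta^0_2$ approximation is available) and places increasing weights on strings that witness an alleged bound $\m^a\le c\cdot\m$; we place increasing $\m$-weights on strings while simultaneously constructing, along the current approximation of $a$, an additional labelling that contributes to $\m^a$. We win if either the opponent fails to match our weights along the limit path (so $a$ is not $\KPt$-low in that direction) or our constructed labelling forces $\m^a(b\restriction n)/2^{-n}$ to become unbounded along some prefix of the limit path $b$ whose $\m$-mass we have kept bounded. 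A computable winning strategy in this game, combined with a blind opponent that enumerates $\m$ from below, gives the contrapositive. The strategy itself is built inductively, exactly as in the recursive process $S(c,x,\alpha,L)$ of Section~\ref{sec:trivial-low}: a base case for $c<2$ that places one layer of challenges, and an induction step that recursively calls $S(c-\tfrac{1}{2},\ldots)$ on subtrees rooted at the nodes which witness the previous stage's excess.

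The main obstacle, I expect, is the simultaneous management of two opposing constraints on the constructed $b$: its prefixes must carry \emph{little} $\m$-mass (to remain ML-random against the universal non-relativized test) while carrying \emph{much} $\m^a$-mass (to be captured by the relativized test). This is a mirror image of the accounting in Section~\ref{sec:trivial-low}, where the roles of the two semi-measures were reversed, and the randomness condition on $b$ is a uniform constraint on \emph{all} prefixes rather than a single inequality. The golden-run style recursion is well suited for this: at each recursive level the local budget is a small fraction of the previous one, so the geometric series of $\m$-mass deposited along any single path stays bounded, while the amplification factor $c\to c+\tfrac{1}{2}$ ensures that the $\m^a$-excess collected along the surviving path grows without bound. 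Handling the interaction between the budget book-keeping and the lower-semicomputable approximation of $\m^a$ along a moving path is the technically delicate part, but I expect the inductive template of the previous section to transfer with only cosmetic changes.
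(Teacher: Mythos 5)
Your setup already has a circularity problem. The game template of the earlier sections requires the opponent to present $a$ as a computable pointwise approximation, and there this was licensed by Theorem~\ref{thm:trivial-halting-computable} because the hypothesis was $\KPt$-triviality. Here the hypothesis is $\MLR$-lowness, and your justification (``MLR-lowness \dots prevents $a$ from computing $\mathbf{0}'$, so a $\Delta^0_2$ approximation is available'') does not work: not computing $\mathbf{0}'$ does not imply being $\Delta^0_2$, and the only known reason an $\MLR$-low sequence is $\Delta^0_2$ is that it is $\KPt$-trivial --- essentially the theorem you are trying to prove. The win conditions are also misaligned with the contrapositive you announce: a computable strategy winning whenever ``either the opponent fails to match our weights along the limit path or our labelling makes $\m^a((b)_n)\cdot 2^{n}$ unbounded'' would show that every sequence the opponent can play is either not $\KPt$-low or not $\MLR$-low, which is already false for computable $a$. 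The hypothesis ``$a$ is not $\KPt$-low'' is a negative, unbounded condition (for every $c$ some string $x$ has $\m^a(x)>c\,\m(x)$) and does not fit the matching-along-the-path template in the way $\KPt$-triviality did; the quantifier structure of your game needs to be rebuilt from scratch, and as stated it proves nothing.

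The core of the argument is also missing rather than merely delicate: you must convert the unboundedness of $\m^a(x)/\m(x)$ on individual strings into a single path $b$ with $\m((b)_n)=O(2^{-n})$ yet $\m^a((b)_n)\cdot 2^{n}$ unbounded, and ``I expect the inductive template of the previous section to transfer with only cosmetic changes'' is not warranted. The golden run uses the domination hypothesis along the path ($\KPt$-triviality) to copy $\m^a$ into an unrelativized semimeasure; reversing the roles gives you no control of the uniform measure $2^{-n}$ along the path, since the excess of $\m^a$ over $\m$ may sit on strings where both are far below $2^{-|x|}$. The paper's proof takes a genuinely different route: it proves the stronger relativized statement that $a\leMLR b$ implies $a\leK b$ (Theorem~\ref{thm:K-MLR}) by purely measure-theoretic means --- Ku\v cera's lemma, the characterization of $\leMLR$ as ``every $A$-effectively open set of measure less than $1$ is covered by a $B$-effectively open set of measure less than $1$'' (Lemma~\ref{nonfull}), and the product-space trick translating a convergent lower semicomputable series into such an open set and back --- with no games, no approximation of $a$, and no golden run, and it works for arbitrary oracles. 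To salvage your plan you would have to either first prove $\MLR$-low $\Rightarrow$ $\KPt$-trivial by some independent argument (for instance the Hirschfeldt--Nies--Stephan route via bases for randomness) and then quote Section~\ref{sec:trivial-low}, or actually supply the missing construction; as written, the proposal does not establish the theorem.
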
 

We will prove a more general result, but first let us give the definitions.

\begin{definition}
Let $a$ and $b$ be two sequences, considered as oracles. We say that $a\leK b$ if $$\KP^b(x)\le \KP^a(x)+O(1).$$ We say that $a\leMLR b$ if every sequence that is Martin-L\"of random relative to~$b$ is also Martin-L\"of random relative to~$a$. 
\end{definition}

If one oracle $b$ is stronger in the Turing sense than another oracle $a$, then~$b$ allows to generate a larger class of effectively null sets, and the set of random sequences relative to $b$ is smaller that the set of random sequences relative $a$; the Kolmogorov complexity function relative to~$b$ is also smaller than Kolmogorov complexity function relative to~$a$. Therefore, we have $a\leMLR b$ and $a\leK b$. So both orderings are coarser than the Turing degree ordering.

We can now reformulate the definitions of $\KPt$-lowness and $\MLR$-lowness: a sequence $a$ is $\KPt$-low if $a\leK \mathbf{0}$ and is $\MLR$-low if $a\leMLR \mathbf{0}$. So to prove Theorem~\ref{thm:MLR-K-low} it is enough to prove the following result~\cite{khms}:

\begin{theorem}\label{thm:K-MLR}
The conditions $a\leK  b$ and  $a\leMLR b$ are equivalent.
\end{theorem}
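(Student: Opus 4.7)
The implication $a\leK b \Rightarrow a\leMLR b$ is immediate from the relativized Schnorr--Levin criterion. Given $x$ that is Martin-L\"of random relative to $b$, we have $\KP^b((x)_n)\ge n-O(1)$, and combining with the hypothesis $\KP^b\le \KP^a+O(1)$ yields $\KP^a((x)_n)\ge n-O(1)$, so $x$ is also Martin-L\"of random relative to $a$.

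For the converse $a\leMLR b \Rightarrow a\leK b$, my plan is to relativize the game-theoretic construction of Section~\ref{sec:trivial-low}. The target is to produce a $b$-left-c.e.~semimeasure $\nu$ on integers with $\nu(x)\ge \Omega(\m^a(x))$; via the $\m$--$\KP$ correspondence this is exactly $\KP^b(x)\le \KP^a(x)+O(1)$, i.e., $a\leK b$. I would reuse the same two-player game template, relativized to $b$: the opponent approximates a candidate sequence $a$ (as a $b'$-computable pointwise approximation given by Shoenfield's limit lemma relative to $b$) together with the labelled binary tree that generates $\m^a$, while we construct, computably in $b$, a semimeasure on objects that $\ast$-exceeds the opponent's semimeasure along the limit path. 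The recursive scheme $S(c,x,\alpha,L)$ with induction on $c$, including the base case, the golden-run analysis, and the splitting of length sets along the tree, transfers verbatim, with every ``computable'' replaced by ``$b$-computable''. If the game is won $b$-computably, the semimeasure on objects produced is $b$-left-c.e.~and dominates $\m^a$, as required.

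The main obstacle is that in Section~\ref{sec:trivial-low} the hypothesis of $\KPt$-triviality directly provided the cost-function data needed to run the recursion, whereas here we only have the seemingly weaker hypothesis $a\leMLR b$. The crucial technical step is therefore to show that $a\leMLR b$ suffices to constrain the opponent's moves in the relativized game. I would argue by contrapositive: if our $b$-computable strategy cannot be made to win for some recursion level, the failing configurations can be assembled into a $b$-c.e.~Solovay test (via a decanter-style construction of the same flavor as in Section~\ref{sec:trivial-low}) that captures an $a$-random sequence, contradicting $a\leMLR b$. Equivalently, $a\leMLR b$ guarantees that every $a$-left-c.e.~cost function that the opponent may exploit to force us to spend budget can be tracked by a $b$-left-c.e.~object, which is exactly what the induction requires. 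Once this transfer from an LR-bound to the game-theoretic cost bound is established, the rest of the proof is the same recursive construction as in Section~\ref{sec:trivial-low}, and yields the desired $b$-left-c.e.~semimeasure dominating $\m^a$.
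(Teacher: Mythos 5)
Your first direction is fine, but the converse as you sketch it has a genuine gap---in fact two. First, the game template of Section~\ref{sec:trivial-low} cannot even be set up in this situation: it requires the opponent to present $a$ as a pointwise limit of a $b$-computable approximation, i.e.\ it requires $a\le_T b'$. In Section~\ref{sec:trivial-low} this was licensed by Theorem~\ref{thm:trivial-halting-computable} ($\KPt$-trivial sequences are $\mathbf{0}'$-computable), but the hypothesis $a\leMLR b$ gives no such bound on the complexity of $a$ relative to $b$ (in general the class of $a$ with $a\leMLR b$ is not contained in the countable class of $b'$-computable sequences), and you offer no substitute. Second, and more centrally, the step you yourself call ``the crucial technical step'' is, modulo wording, the theorem to be proved: saying that every lower $a$-semicomputable cost function or semimeasure ``can be tracked by'' a lower $b$-semicomputable object is exactly the statement $\m^a(x)\le O(\m^b(x))$, i.e.\ $a\leK b$. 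Gesturing at a ``decanter-style'' contrapositive in which failing configurations are assembled into a $b$-c.e.\ Solovay test capturing an $a$-non-random, $b$-random sequence is not a proof: nothing is said about what these configurations are, why their total measure is bounded, or why the captured sequence is $b$-random yet not $a$-random. As written, the whole content of the hard direction is deferred to an unexecuted construction, so the argument is circular.

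For comparison, the paper's proof of this direction uses no game machinery at all. It first converts the qualitative hypothesis into a covering statement: by Ku\v cera's Lemma~\ref{kucera} and Lemma~\ref{nonfull}, $a\leMLR b$ holds if and only if every $a$-effectively open set of measure less than $1$ can be covered by a $b$-effectively open set of measure less than $1$ (this is where the real work happens, via a self-similarity argument that produces a $b$-random, non-$a$-random sequence if covering fails---precisely what your Solovay-test sketch would have to reproduce). The quantitative conclusion is then obtained by a short trick: a convergent lower $a$-semicomputable series $\sum a_n$ with $a_n<1$ is encoded as the $a$-effectively open set of those points of a countable product of unit intervals whose $n$-th coordinate is below $a_n$ for some $n$; this set has measure $1-\prod(1-a_n)<1$, so it is covered by some $b$-effectively open $V$ of measure less than $1$, and setting $b_n$ to be the supremum of $z$ such that the corresponding slab with $n$-th coordinate in $[0,z]$ lies inside $V$ gives a lower $b$-semicomputable series with $a_n\le b_n$ and $\sum b_n<\infty$, i.e.\ $\m^a\le O(\m^b)$. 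If you want to keep your plan, the missing work is exactly an analogue of this quantitative transfer from the LR-hypothesis; it cannot simply be assumed as an available ``cost-function tracking'' principle.
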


\begin{proof}
The left-to-right direction once again follows directly from the Schnorr--Levin randomness criterion. The proof in the other direction is more difficult\footnote{This was to be expected. The relation $a\leK b$ is quantitative: it states that two functions coincide with $O(1)$-precision; whether the relation $a\leMLR b$ holds, on the other hand, is a qualitative yes/no question. One can also consider the quantitative version, with randomness deficiencies, but this is unnecessary: the relation $\leMLR$ is already strong enough to obtain an equivalence.}, and will be split in several steps.

Recall that the set of non-random sequences (in the Martin-L\"of sense; we do not use other notions of randomness here) can be described using a universal Martin-L\"of test, that is, represented as the intersection of effectively open sets
     $$
 U_1\supset U_2 \supset U_3\supset\ldots     
     $$
where $U_i$ has measure at most $2^{-i}$ for all $i$. The following observation goes back to Ku\v cera and says that  the first layer of this test,  the set $U_1$, is enough to characterize all non-random sequences. 
\begin{lemma}\label{kucera}
Let $U$ be an effectively open set of measure less than $1$ that contains all non-random sequences. Then a sequence $x=x_0 x_1 x_2\ldots$ is non-random if and only if all its tails $x_k x_{k+1} x_{k+2}\ldots$ belong to $U$. 
\end{lemma}

\begin{proof}
If $x$ is non-random, then all its tails are non-random and therefore belong to $U$. For the other direction we represent $U$ as the union of disjoint intervals $[u_0], [u_1],\ldots$ (by $[v]$ we denote the set of all sequences that have prefix~$v$). Their total measure $\rho=\sum 2^{-|u_i|}$ is less than $1$. If all tails of $x$, including $x$ itself, belong to $U$, then $x$ starts with some $u_i$. The rest is a tail that starts with some $u_j$, etc., so $x$ can be split into pieces that belong to $\{u_0,u_1,\ldots\}$. The set of sequences of the form ``some $u_i$, then something'' has measure $\rho$, the set of sequences of the form ``some $u_i$, some $u_j$, then something'' has measure $\rho^2$, etc. These sets are effectively open and their measures $\rho^n$ effectively converge to $0$. So their intersection is an effectively null set and $x$ is non-random. \qed
\end{proof}

The argument gives also the following:

\begin{corollary}
A sequence $x$ is non-random if there exists an effectively open set~$U$ of measure less than $1$ such that all tails of $x$ belong to $U$.
\end{corollary}

\smallskip

This corollary can be relativized, so randomness with oracle $a$ can be characterized in terms of $a$-effectively open sets of measure less that $1$: \emph{a sequence $x$ is $a$-nonrandom if there exists an $a$-effectively open set~$U$ of measure less than $1$ such that all tails of $x$ belong to $U$}. This gives one implication in the following equivalence (here we denote the oracles by capitals letter to distinguish them from sequences):

\begin{lemma}\label{nonfull}
Let $A$ and $B$ be two oracles. Then $A\leMLR B$ if and only if every $A$-effectively open set of measure less than $1$ can be covered by some $B$-effectively open set of measure less than $1$.
\end{lemma}

\begin{proof}
The ``if'' direction ($\Leftarrow$)  follows from the above discussion: if $x$ is not $A$-random, its tails can be covered by some $A$-effectively open set of measure less than $1$ and therefore by some $B$-effectively open set of measure less than $1$, so $x$ is not $B$-random.

In the other direction: assume that $U$ is an $A$-effectively open set of measure less than~$1$ that cannot be covered by any $B$-effectively open set of measure less than $1$. The set $U$ is the union of an $A$-enumerable sequence of disjoint intervals $[u_1],[u_2],[u_3]$, etc. Consider a set $V$ that is $B$-effectively open, contains all $B$-non-random sequences and has measure less than $1$ (e.g., the first level of the universal $B$-Martin-L\"of test). By assumption $U$ is not covered by $V$, so some interval $[u_i]$ of $U$ is not entirely covered by $V$.

The set $V$ has the following special property: if it does not contain \emph{all} points of some interval, then it cannot contain \emph{almost all} points of this interval, i.e.,  the uncovered part must have some positive measure. Indeed, the uncovered part is a $B$-effectively closed set, and if it has measure zero, it has $B$-effectively measure zero, so all non-covered sequences are $B$-non-random, and therefore should be covered by $V$.

So we found an interval $[u_i]$ in $U$ such that $[u_i]\setminus V$ has positive measure. Then consider the set $V_1=V/u_i$, i.e., the set of infinite sequences $\alpha$ such that $u_i\alpha\in V$. This is a $B$-effectively open set of measure less than $1$, so it does not cover $U$ (again by our assumption). So there exists some interval $[u_j]$ not covered by $V/u_i$. This means that $[u_i u_j]$ is not covered by $V$. We repeat the argument and conclude that the uncovered part has positive measure, so $V/u_i u_j$ is a $B$-effectively open set of measure less than $1$, so it does not cover some $[u_k]$, etc.  In the limit we obtain a sequence $u_i u_j u_k\ldots$ whose prefixes define intervals not covered fully by $V$. Since $V$ is open, this sequence does not belong to $V$, so it is $B$-random. On the other hand, it is not $A$-random, as the argument from the proof of Lemma~\ref{kucera} shows. 
      \qed
\end{proof}

Let us summarize how far we have come so far. Assuming that $A\leMLR B$, we have shown that every $A$-effectively open set of measure less than $1$ can be covered by some $B$-effectively open set of measure less than $1$. What we need to show is that $A\leK B$, i.e., $\KP^B\le \KP^A$ (up to an additive constant), or $\m^A\le \m^B$ (up to a constant factor). This can be reformulated as follows: \emph{for every lower $A$-semicomputable converging series $\sum a_n$ of reals there exists a converging lower $B$-semicomputable series $\sum b_n$ of reals such that $a_n\le b_n$ for every $n$}.

So to connect our assumption and our goal, we need to find a way to convert a converging lower semicomputable series into an effectively open set of measure less than $1$ and vice versa. We may assume without loss of generality that all $a_i$ are strictly less than $1$. Then $\sum a_n<\infty$ is
equivalent to 
    $$
(1-a_0)(1-a_1)(1-a_2)\ldots > 0.    
    $$
This product is a measure of an $A$-effectively closed set 
    $$
 [a_0,1]\times [a_1,1]\times[a_2,1]\times\ldots    
    $$
whose complement 
  $$
  U= \{ (x_0,x_1,\ldots)\mid (x_0<a_0) \lor (x_1<a_1)\lor \ldots \}   
    $$
is an $A$-effectively open set of measure less than $1$. (Here we split Cantor space into a countable product of Cantor spaces and identify each of them with $[0,1]$ equipped with the standard uniform measure on the unit interval.) We are finally ready to apply our assumption and find some $B$-effectively open set $V$ that contains~$U$.
  
Let us define $b_0$ as the supremum of all $z$ such that
 $$[0,z]\times [0,1]\times [0,1]\times\ldots \subset  V$$   
This product is compact for every $z$, and $V$ is $B$-effectively open, so we can $B$-enumerate all rational $z$ with this property, and their supremum $b_0$ is lower $B$-semicomputable. Note that all $z<a_0$ have this property (the set $[0,a_0)\times [0,1]\times[0,1]\times\ldots$ is covered by $U$), so $a_0\le b_0$. In a similar way we define all $b_i$ and get a lower $B$-semicomputable series $b_i$ such that $a_i\le b_i$. It remains to show that $\sum b_i$ is finite. Indeed, the set
    $$
 \{ (x_0,x_1,\ldots)\mid (x_0<b_0) \lor (x_1<b_1)\lor \ldots \}   
    $$
is a part of $V$, and therefore has measure less than $1$; its complement 
    $$
 [b_0,1]\times [b_1,1]\times[b_2,1]\times\ldots    
    $$   
has measure $(1-b_0)(1-b_1) (1-b_2)\ldots$, therefore this product is positive and the series $\sum b_i$ converges. This finishes the proof.  
    \qed 
\end{proof}

\subsubsection*{Acknowledgments.}
This exposition was finished while one of the authors (A.S.) was invited to the National University of Singapore IMS's \emph{Algorithmic Randomness} program. The authors thank the IMS for the support and for the possibility to discuss several topics (including this exposition) with the other participants (special thanks to Rupert H\"olzl and Nikolai Vereshchagin; R.H. also kindly looked at the final version and corrected many errors there). We also thank Andr\'e Nies for the suggestion to write down this exposition for the Logic Blog and for his comments. We have very useful discussions with the participants of the course taught by L.B. at the Laboratoire Poncelet (CNRS, Moscow), especially Misha Andreev and Gleb Novikov. Last but not least, we are grateful to Joe Miller who presented the proof of the equivalence between $\leK$ and $\leMLR$ while visiting the LIRMM several years ago.

\end{document}